\documentclass[12pt, a4paper]{amsart}
\let\cal\mathcal
\usepackage{amssymb}
\usepackage{amsthm}
\usepackage{amsfonts}
\usepackage{url}
\newtheorem{lemma}{\bf Lemma}[section]

\newtheorem{proposition}[lemma]{\bf Proposition}
\newtheorem{conjecture}[lemma]{\bf Conjecture}
\begin{document}
\title{Hadwiger's conjecture for hypergraphs}
\begin{abstract}
In 1943, Hadwiger formulated his celebrated conjecture \cite{hadw},
	connecting the chromatic number $\chi(G)$ of a finite, 
	simple, undirected graph with the cardinality of the largest
	complete minor, $\eta(G)$. The disjoint union of all finite
	complete graphs shows that Hadwiger's conjecture fails for infinite
	graphs \cite{zyp1}, but a slightly weaker version is true
	in these graphs \cite{zyp2}, and open for finite graphs. In this
	note we generalize that weaker version to hypergraphs and provide
	a simple, general, and purely set-theoretical formulation of Hadwiger's 
	conjecture.
\end{abstract}
\author{Dominic van der Zypen}
\address{Federal office of border and customs security, Taubenstrasse 16,
CH-3003 Bern, Switzerland}
\email{dominic.zypen@gmail.com}
\maketitle
\section{Introduction}
The conjecture of Hadwiger, formulated in 1943 \cite{hadw}, is one of the central open
problems in graph theory. Recall that a graph $F$ is a {\em minor} of a graph $G$
if a graph isomorphic to $F$ can be obtained from $G$ by a sequence
of vertex deletions, edge deletions, and edge contractions.
\begin{conjecture} {\em (Hadwiger's conjecture.)} Let $G$ be a finite graph 
	and $t\in \mathbb{N}$. If $\chi(G) > t$, then $G$ has $K_t$ as a minor.
	Equivalently, if $G$ does not have $K_t$ as a minor, then $\chi(G) < t$. 
\end{conjecture}
The disjoint union of all finite
complete graphs shows that Hadwiger's conjecture fails for infinite
graphs \cite{zyp1}, but a slightly weaker version is true
in these graphs \cite{zyp2}, and open for finite graphs. In this
note we generalize that weaker version to hypergraphs.
\section{Basic notions}
\subsection{Hypergraphs} A {\em hypergraph} $H=(V,E)$ consists of a
set $V$ and $E\subseteq {\mathcal P}(V)$, that is, $E$ 
consists of subsets of $V$ of arbitrary size. 

\subsection{Connected subsets} A subset $S\subseteq V$ of a hypergraph $H=(V,E)$ 
is {\em connected} if for every nonempty 
$X\subseteq S$ with $X\neq S$ there is $e\in E$ such that 
$$(e\cap S) \cap X \neq \emptyset\text{ and }
(e\cap S)\setminus X\neq \emptyset.$$

\subsection{Connected to each other} If $H=(V,E)$ is a hypergraph
and $S_1, S_2\subseteq V$ are disjoint, we say they are {\em connected
to each other} if there is $e\in E$ such that $$e\cap S_1 \neq \varnothing
\text{ and }  e\cap S_2\neq \varnothing.$$

\subsection{Colouring} Let $H=(V,E)$ be a hypergraph and 
$\kappa\neq \varnothing$ be a cardinal.
Then a map $c:V\to \kappa$ is said to be a {\em colouring} if for
every $e\in E$ with $|e|\geq 2$ we have that
the restriction $c\restriction_e$ is non-constant. The {\em chromatic
number} $\chi(H)$ of $H$ is the smallest cardinal $\kappa$ such there
is a colouring $c:V\to \kappa$.

\section{A form of Hadwiger's conjecture for hypergraphs}

Assume that $H=(V,E)$ is a hypergraph and let's assume 
$V\neq \varnothing \neq E$ to avoid pathologies. Moreover, for
any set $X$, we set $[X]^2= \big\{\{x,y\}:x, y\in X \land x\neq y\big\}$. 

The {\sl
weak Hadwiger conjecture for hypergraphs} states that:
\begin{quote}
	(WHC): Suppose $\kappa\neq\emptyset$ is a cardinal such that for every map $c:V\to\kappa$
	there is $e\in E$ with $|e|\geq 2$ and $c|_e:e\to \kappa$ is constant. Then
	there is ${\cal S}\subseteq {\cal P}(V)$ such that 
        \begin{enumerate}
		\item $|{\cal S}| =\kappa$,
		\item every member of ${\cal S}$ is connected, and
		\item whenever $T_0, T_1\in {\cal S}$ with $T_0\neq T_1$, then
			$T_0, T_1$ are connected to each other.
	\end{enumerate}
\end{quote}

\begin{proposition}\label{main} {\em
	(WHC) is equivalent to (WHC) restricted to graphs (= hypergraphs
	in which every edge has size $2$).}
\end{proposition}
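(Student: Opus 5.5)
One direction is immediate: every graph is a hypergraph, so (WHC) for all hypergraphs gives (WHC) for graphs. For the converse I assume (WHC) for graphs and take a hypergraph $H=(V,E)$ and a cardinal $\kappa$ such that every map $c:V\to\kappa$ is constant on some $e\in E$ with $|e|\geq 2$. The plan is to replace each hyperedge by a spanning tree on its vertex set, apply the graph version to the resulting graph, and pull the conclusion back to $H$.

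\textbf{Construction.} By the axiom of choice, for each $e\in E$ with $|e|\geq 2$ I fix a well-order of $e$ with least element $r_e$. I then choose the star with centre $r_e$, whose edge set is $\{\{r_e,x\}: x\in e\setminus\{r_e\}\}$. Let $G=(V,E_G)$ be the graph whose edge set $E_G\subseteq[V]^2$ is the union of all these stars. Each edge of $G$ lies inside some hyperedge $e$, and each such star is connected and spans $e$.

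\textbf{Hypothesis transfers.} Let $c:V\to\kappa$. By assumption $c$ is constant on some $e\in E$ with $|e|\geq2$, and then $c$ is constant on any star edge $\{r_e,x\}\subseteq e$. So $G$ satisfies the hypothesis of (WHC). The graph version then gives ${\cal S}\subseteq{\cal P}(V)$ with $|{\cal S}|=\kappa$, every member connected in $G$, and distinct members connected to each other in $G$.

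\textbf{Conclusion transfers.} For the third property: if $T_0,T_1$ are connected to each other in $G$, there is a graph edge $\{u,v\}$ meeting both, and it lies in some $e\in E$, so $e$ meets both $T_0$ and $T_1$. The main point to check is the second property, connectedness in $H$. Take $S$ connected in $G$ and $X\subsetneq S$ nonempty. There is a graph edge $f=\{u,v\}\subseteq e$ with $f\cap S$ meeting both $X$ and $S\setminus X$. Since $|f|=2$, this forces $u,v\in S$ with one in $X$ and one outside. Then $e\cap S$ contains both $u$ and $v$, so $(e\cap S)\cap X\neq\varnothing$ and $(e\cap S)\setminus X\neq\varnothing$. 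Hence $S$ is connected in $H$, and ${\cal S}$ witnesses (WHC) for $H$.

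This argument never used that the spanning subgraph of each hyperedge is connected, only that its edges lie inside $e$ and that $e$ contains at least one edge. So I could simply take a single pair inside each hyperedge; the stars are not needed. The only delicate points are the argument above that $|f|=2$ forces both endpoints into $S$, and the use of choice, which is harmless in this set-theoretic setting.
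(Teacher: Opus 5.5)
Your proof is correct and follows the paper's route. The paper replaces each hyperedge $e$ by the complete graph $[e]^2$ rather than by a star or a single pair, applies the graph case, and pulls the conclusion back using exactly your two facts: each hyperedge of size at least $2$ contains a graph edge, and each graph edge lies inside a hyperedge. Your verification supplies the step the paper only calls ``easy to check''; the one difference is that taking all of $[e]^2$ is canonical and so avoids the choice you need to pick a root or pair in each hyperedge.
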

{\em Proof.} One direction is evident. So we assume that (WHC) restricted to graphs hold.
Let $H=(V, E)$ be a hypergraph and 
	suppose $\kappa\neq\emptyset$ is a cardinal such that for every map $c:V\to\kappa$
	there is $e\in E$ with $|e|\geq 2$ and $c|_e:e\to \kappa$ is constant.
	
Consider the graph $G(H) :=(V, E_2)$ where $$E_2 = \bigcup \{[e]^2: e\in E\}.$$

Let $c:V\to \kappa$ be any map. We know that there is $e_0\in E$ with $|e_0|\geq 2$ and 
$c|_{e_0}$ is constant. Pick any distinct $x,y\in e_0$. So $\{x,y\}\in E_2$ and $c|_{\{x,y\}}$
is constant. Since $c: V\to \kappa$ was arbitrary, we can apply (WHC) restricted to graphs
to the graph $G(H) = (V, E_2)$ and get a set ${\cal S}\subseteq {\cal P}(V)$ of cardinality
$\kappa$ and obeying the three requirements noted in the definition of (WHC) above.

Finally, it is easy to check that every member of ${\cal S}$ is connected in the hypergraph
$H=(V, E)$, and every two distinct members of ${\cal S}$ are connected to each other in that 
hypergraph.\hfill{$\Box$}

\section{Concluding remarks}
The simple result contained in \ref{main} does not mean that the study of
hypergraphs and their colorings and minors is not worthwhile. 
Even though the general conjecture is logically equivalent 
for graphs and hypergraphs, \cite{stei} shows that hypergraph colorings 
and minors are an interesting subject in their own right.


\end{document}